\newtheorem{theorem}{Theorem}[section]
\newtheorem{corollary}[theorem]{Corollary}
\newtheorem{lemma}[theorem]{Lemma}
\newtheorem{example}[theorem]{Example}
\newtheorem{proposition}[theorem]{Proposition}
\newtheorem{definition}{Definition}[section]
\theoremstyle{definition}
\theoremstyle{remark}
\theoremstyle{remark}
\newcommand{\beql}[1]{\begin{equation}\label{#1}}
\newcommand{\eeq}{\end{equation}}
\begin{document}

\title{Hadamard matrices modulo 5}

\author{Moon Ho Lee and Ferenc Sz\"oll\H{o}si}

\date{July 7, 2013}

\address{M. H. Lee: Division of Electronics and Information Engineering,
Chonbuk National University, Jeonju, Republic of Korea}\email{moonho@jbnu.ac.kr}

\address{F. Sz\"oll\H{o}si: Institute of Mathematics, Department of Analysis, Budapest University of Technology and Economics, H-1111, Egry J. u. 1, Budapest, Hungary}\email{szoferi@gmail.com}

\thanks{This work was supported by the World Class University R32-2012-000-20014-0 NRF, BSRP 2010-0020942 NRF and MEST 2012-002521 NRF, Korea; and by the Hungarian National Research Fund OTKA K-77748.}


\begin{abstract}
In this paper we introduce modular symmetric designs and use them to study the existence of Hadamard matrices modulo $5$. We prove that there exist $5$-modular Hadamard matrices of order $n$ if and only if $n\not\equiv 3,7\ (\mathrm{mod}\ 10)$ or $n\neq 6, 11$. In particular, this solves the $5$-modular version of the Hadamard conjecture.
\end{abstract}

\maketitle

{\bf 2000 Mathematics Subject Classification.} Primary 05B20, secondary 05B05.
	
{\bf Keywords and phrases.} {\it Modular Hadamard matrix, Combinatorial design, Modular symmetric design.}
\section{Introduction}
Hadamard matrices, real or generalized, have many applications in mathematics \cite{KJ}, \cite{KM}. A real Hadamard matrix of order $n$ is an $n\times n$ matrix $H$ with $\pm1$ entries such that $HH^T=nI$ where $T$ denotes the transpose and $I$ is the identity matrix. Note that the rows and columns of Hadamard matrices are orthogonal. Here we are concerned with modular Hadamard matrices. Given a modulus $m\geq2$, an $m$-modular Hadamard matrix $H$ of size $n$ is an $n\times n$ matrix with $\pm 1$ entries such that $HH^T\equiv nI\ (\mathrm{mod}\ m)$. A modular Hadamard matrix is normalized, if the elements in its first row and column are all $1$. Modular Hadamard matrices were introduced in $1972$ by Marrero and Butson \cite{MB1}, who related these objects to various combinatorial designs and also gave several general constructions obtaining modular Hadamard matrices. Subsequently further results were achieved in \cite{RP}, \cite{BDS} and \cite{M3}. Recently, Eliahou and Kervaire in \cite{EK} proved the existence of $32$-modular Hadamard matrices for every order $n$ divisible by $4$ by using modular Golay sequences \cite{GJ}. Their efforts were motivated by two long-standing conjectures of combinatorics, namely the Hadamard conjecture, and Ryser's conjecture. The Hadamard conjecture presumes the existence of real Hadamard matrices in every doubly even order, while Ryser conjectured that there does not exist any circulant real Hadamard matrix of order $n>4$ \cite{RY}. Currently both of these conjectures are far out of reach, despite recent efforts \cite{LS}, \cite{M}. We remark here that various other combinatorial problems have their modular analogue as well: notable examples are Golomb rulers \cite{GR} (or finite Sidon sets \cite{RU}). Results from the modular setting frequently can be translated to the non-modular setting.

The concept of modular Hadamard matrices resurfaced in the engineering literature recently during the course of the investigation of jacket matrices \cite{CWH}. In particular, in reference \cite{LY} some connections to cryptographic applications were mentioned \cite{R}, \cite{S}.

The outline of this paper is as follows. After this introductory section, in Section $2$ we recall some results from the literature to briefly discuss the existence of modular Hadamard matrices of small moduli. In Section $3$ we generalize a concept of Marrero \cite{BDS}, and introduce, what we call $m$-modular symmetric designs. Additionally, we present a fairly general direct sum type construction of modular Hadamard matrices. As an application, we use this newly developed theory to decide the existence of $5$-modular Hadamard matrices.

Throughout this paper we use the shorthand notation $\mathrm{MH}(n,m)$ for denoting an $m$-modular Hadamard matrix of size $n$. By convention, a real Hadamard matrix of order $n$ is denoted by $\mathrm{MH}(n,0)$. We also use the notation $(a,b)$ to refer to the greatest common divisor of the integer numbers $a,b\geq 0$.
\section{Preliminaries}
The aim of this section is to provide the reader with an overview of the basic results on modular Hadamard matrices. It turns out that it is possible to completely decide the existence of $\mathrm{MH}(n,m)$ matrices for $m=2,3,4$ and $6$ by utilizing relatively simple methods, most of which were introduced in \cite{MB1} and \cite{MB2}. We begin with recalling some necessary conditions as follows. We denote by $\varphi(n)$ Euler's totient function, as usual.
\begin{lemma}[cf.\ \mbox{\cite[Corollary $2.1$]{MB1}}]\label{L21}
Let $H$ be a $\mathrm{MH}(n,m)$ matrix with $n\geq 3$. Then,
\begin{enumerate}[$($a$)$]
\item if $m$ is even, then $n$ is even. Moreover, if $m\equiv0\ (\mathrm{mod}\ 4)$ then $n\equiv 0\ (\mathrm{mod}\ 4)$;
\item if $m$ is odd, $n\not\equiv0\ (\mathrm{mod}\ m)$ then $n\geq 4r$, where $1\leq r\leq m-1$, such that $r\equiv 2^{\varphi(m)-2}n\ (\mathrm{mod}\ m)$.
\end{enumerate}
\end{lemma}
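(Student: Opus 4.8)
The plan is to exploit the single structural fact available: if $H$ is an $\mathrm{MH}(n,m)$ matrix with rows $r_1,\dots,r_n\in\{\pm1\}^n$, then $\langle r_i,r_i\rangle=n$ exactly, while $\langle r_i,r_j\rangle\equiv 0\ (\mathrm{mod}\ m)$ for $i\neq j$. I will repeatedly use that for $\pm1$ vectors $\langle r_i,r_j\rangle=n-2d_{ij}$, where $d_{ij}$ is the Hamming distance, so in particular $\langle r_i,r_j\rangle\equiv n\ (\mathrm{mod}\ 2)$. Throughout I may normalize $H$, since negating a row or a column sends every off-diagonal Gram entry to its negative (hence keeps it $\equiv0$) and fixes the diagonal; thus I assume the first row equals the all-ones vector.

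For part (a), first suppose $m$ is even. Picking any two distinct rows (possible as $n\geq 3$), the congruence $\langle r_i,r_j\rangle\equiv0\ (\mathrm{mod}\ m)$ forces $\langle r_i,r_j\rangle$ to be even, and combined with $\langle r_i,r_j\rangle\equiv n\ (\mathrm{mod}\ 2)$ this gives $n$ even. For the sharper statement when $4\mid m$, I would use three distinct rows $x,y,z$ and the coordinatewise identity: at each position $k$ the value $x_ky_k+y_kz_k+z_kx_k$ equals $3$ (all signs equal) or $-1$ (otherwise), so it is always $\equiv 3\ (\mathrm{mod}\ 4)$. Summing over the $n$ coordinates yields
\[
\langle x,y\rangle+\langle y,z\rangle+\langle z,x\rangle\equiv 3n\ (\mathrm{mod}\ 4).
\]
Since $4\mid m$, the left-hand side is $\equiv0\ (\mathrm{mod}\ 4)$, whence $3n\equiv0$ and therefore $n\equiv0\ (\mathrm{mod}\ 4)$, using that $3$ is invertible modulo $4$.

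For part (b), with $m$ odd the integer $2$ is a unit modulo $m$, and Euler's theorem gives $2^{\varphi(m)-2}\equiv 4^{-1}\ (\mathrm{mod}\ m)$; hence the prescribed $r$ is just the representative in $\{1,\dots,m-1\}$ of $4^{-1}n$, and $r\neq0$ precisely because $n\not\equiv0\ (\mathrm{mod}\ m)$. I would take the all-ones first row together with two further rows $r_2,r_3$ (available since $n\geq 3$) and record the four coincidence counts $p,q,s,t$, namely the numbers of coordinates where $(r_2,r_3)$ equals $(+,+),(+,-),(-,+),(-,-)$ respectively. The three orthogonality relations $\langle r_1,r_2\rangle,\langle r_1,r_3\rangle,\langle r_2,r_3\rangle\equiv0\ (\mathrm{mod}\ m)$ translate into $p+q\equiv p+s\equiv p+t\equiv 2^{-1}n$, forcing $q\equiv s\equiv t\ (\mathrm{mod}\ m)$; feeding this into the exact identity $p+q+s+t=n$ then yields $2t\equiv 2^{-1}n$, i.e. $t\equiv 4^{-1}n\equiv r$, and back-substitution gives $p\equiv q\equiv s\equiv t\equiv r\ (\mathrm{mod}\ m)$. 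Since each of $p,q,s,t$ is a nonnegative integer congruent to $r\in\{1,\dots,m-1\}$, each is at least $r$, and summing gives $n=p+q+s+t\geq 4r$.

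The computations are short; the only real ingenuity lies in locating the two exact identities that drive the argument — the coordinatewise $\{3,-1\}$ dichotomy modulo $4$ in part (a), and the observation in part (b) that all four cells of the $2\times2$ coincidence table of two rows collapse to the same residue $r\equiv 4^{-1}n$. Once these are in hand, the inequality $n\geq 4r$ is immediate from nonnegativity of the counts, and I expect no further obstacle.
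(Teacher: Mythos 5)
Your proof is correct and follows essentially the same route as the paper: both arguments normalize $H$ and analyze the four coincidence counts of two further rows (your $p,q,s,t$ are the paper's $A,B,C,D$), deriving $4A\equiv n\ (\mathrm{mod}\ m)$ and, for odd $m$, $A\equiv B\equiv C\equiv D\equiv 4^{-1}n\equiv 2^{\varphi(m)-2}n\ (\mathrm{mod}\ m)$, whence $n\geq 4r$ by nonnegativity of the counts. The only cosmetic difference is in part (a), where the paper concludes $(4,m)\mid n$ directly from $4A\equiv n\ (\mathrm{mod}\ m)$ while you substitute the equivalent coordinatewise identity $x_ky_k+y_kz_k+z_kx_k\equiv 3\ (\mathrm{mod}\ 4)$; both steps are sound.
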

\begin{proof}
We can assume that $H$ is normalized. Let us denote by $A, B, C$ and $D$ the number of vertical pairs $\left[1,1\right]^T$, $\left[1,-1\right]^T$, $\left[-1,1\right]^T$, $\left[-1,-1\right]^T$ in the second and third row of $H$. Note that $A\geq1$ due to normalization. Clearly, $A+B+C+D=n$. Moreover, by considering the orthogonality conditions within the first three rows, we find that
\[A+B-C-D\equiv0\ (\mathrm{mod}\ m),\qquad A-B+C-D\equiv0\ (\mathrm{mod}\ m),\qquad A-B-C+D\equiv0\ (\mathrm{mod}\ m),\]
and consequently $4A\equiv n\ (\mathrm{mod}\ m)$. It follows that $(4,m)|n$.

On the other hand, if $m$ is odd, then we find easily (as $n\not\equiv 0\ (\mathrm{mod}\ m)$), that
\[A\equiv B\equiv C\equiv D\equiv 2^{\varphi(m)-2}n\not\equiv0\ (\mathrm{mod}\ m).\]
Therefore $n=A+B+C+D\geq 4r$, as claimed.
\end{proof}
Another useful restriction is described in the following result.
\begin{lemma}[cf.\ \mbox{\cite[Theorem $2.2$]{MB1}}]\label{L22}
Let $H$ be a $\mathrm{MH}(n,m)$ matrix. If $(n,m)=1$, $n$ is odd then $n$ is a quadratic residue of $m$.
\end{lemma}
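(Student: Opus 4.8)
The plan is to extract arithmetic information from the defining congruence $HH^T\equiv nI\ (\mathrm{mod}\ m)$ by passing to determinants. Since the determinant of an integer matrix is a polynomial with integer coefficients in its entries, any two integer matrices that are congruent modulo $m$ have congruent determinants. Applying this to the pair $HH^T$ and $nI$ gives
\[
(\det H)^2 = \det(HH^T) \equiv \det(nI) = n^n \pmod m.
\]

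Next I would exploit the hypothesis that $n$ is odd. Writing $n=2k+1$, I can factor $n^n = n\cdot (n^k)^2$, so the congruence becomes $(\det H)^2 \equiv n\,(n^k)^2 \pmod m$. Here the assumption $(n,m)=1$ is essential: it guarantees that $n$, and hence $n^k$, is invertible modulo $m$ (and it also forces $\det H$ to be coprime to $m$). Multiplying through by the square of the inverse of $n^k$ then isolates $n$ as a square:
\[
n \equiv (\det H)^2 (n^k)^{-2} \equiv \bigl(\det H\cdot (n^k)^{-1}\bigr)^2 \pmod m,
\]
which exhibits $n$ as a quadratic residue modulo $m$, as claimed.

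The only genuinely delicate point is the justification that congruent integer matrices have congruent determinants; this follows from the integrality of the determinant expansion, but it is the step one must be careful to state, since determinants do not behave well under arbitrary reductions. Beyond that the argument is routine. It is worth highlighting where oddness enters: for even $n$ the quantity $n^n$ is already a perfect square, so the determinant congruence would say only that $\det H\equiv\pm n^{n/2}$ and would carry no information about the residue class of $n$ itself. The oddness is precisely what leaves a single unpaired factor of $n$ after extracting the square, converting the determinant identity into a statement about quadratic residuosity.
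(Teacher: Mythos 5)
Your argument is exactly the paper's: the paper's entire proof is the single line $(\det H)^2\equiv\det(HH^T)\equiv n^n\pmod m$, and you have simply filled in the routine details (extracting the square from $n^n$ using that $n$ is odd, and inverting $n^k$ modulo $m$ using $(n,m)=1$) that the authors leave to the reader. The proposal is correct and complete.
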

\begin{proof}
We have $HH^T\equiv nI\ (\mathrm{mod}\ m)$ and consequently $\left(\mathrm{det}H\right)^2\equiv n^n\ (\mathrm{mod}\ m)$.
\end{proof}
Now we recall some constructions of modular Hadamard matrices. We denote by $J$ the matrix with all entries $1$, as usual.
\begin{lemma}[\mbox{\cite[Theorem $2.3$]{MB2}}]\label{L23}
If $n\equiv 0\ (\mathrm{mod}\ m)$ or $n\equiv 4\ (\mathrm{mod}\ m)$ then there exist $\mathrm{MH}(n,m)$ matrices.
\end{lemma}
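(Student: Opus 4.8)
The plan is to exhibit an explicit matrix in each of the two cases and to verify the defining congruence by a direct computation. The starting observation is that for \emph{any} $n\times n$ matrix $H$ with $\pm1$ entries, every diagonal entry of $HH^T$ automatically equals $n$, since each row has unit-square entries. Consequently the requirement $HH^T\equiv nI\ (\mathrm{mod}\ m)$ reduces to a single condition on the off-diagonal entries: the inner product of any two distinct rows must be divisible by $m$. This is the lens through which I would handle both cases.

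For the case $n\equiv0\ (\mathrm{mod}\ m)$ I would simply take $H=J$. Then $HH^T=nJ$, all of whose off-diagonal entries equal $n\equiv0\ (\mathrm{mod}\ m)$, so the reduced condition is immediate and $HH^T\equiv nI\ (\mathrm{mod}\ m)$.

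For the case $n\equiv4\ (\mathrm{mod}\ m)$ the idea is to perturb the all-ones matrix along its diagonal, setting $H=J-2I$, which has all entries equal to $1$ except for the diagonal entries $-1$, hence is a legitimate $\pm1$ matrix. Since $H$ is symmetric, $HH^T=(J-2I)^2$, and using $J^2=nJ$ one finds $HH^T=(n-4)J+4I$. Because $n\equiv4\ (\mathrm{mod}\ m)$, the term $(n-4)J$ vanishes modulo $m$, leaving $HH^T\equiv4I\equiv nI\ (\mathrm{mod}\ m)$, as required. Conceptually, replacing the $i$-th diagonal $+1$ by $-1$ makes rows $i$ and $j$ differ in exactly the two positions $i$ and $j$, so their inner product drops from $n$ to $n-4$, and the constant $4$ is precisely what the hypothesis $n\equiv4$ is tailored to absorb.

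I do not anticipate a genuine obstacle: both matrices are completely explicit and each verification is a one-line identity. The only points needing care are confirming that $J-2I$ indeed has $\pm1$ entries and that the residual scalar matrix $4I$ is congruent to $nI$ solely by virtue of $n\equiv4\ (\mathrm{mod}\ m)$. The same bookkeeping also explains why the all-ones matrix alone does not settle the second case, and hence why the diagonal perturbation is introduced there.
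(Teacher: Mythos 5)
Your proposal is correct and uses exactly the same construction as the paper, which also takes $J$ for the case $n\equiv 0\ (\mathrm{mod}\ m)$ and $J-2I$ for the case $n\equiv 4\ (\mathrm{mod}\ m)$; you merely spell out the one-line verifications that the paper leaves implicit.
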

\begin{proof}
The matrices $J$ and $J-2I$ are $\mathrm{MH}(n,m)$ matrices when $n$ is a multiple of $m$ or $n-4$ is a multiple of $m$, respectively.
\end{proof}
We can use the Kronecker product to obtain new matrices from old. Although throughout this paper one of the factors is always the $2\times 2$ real Hadamard matrix
\[F_2=\left[\begin{array}{rr}
1 & 1\\
1 & -1
\end{array}\right]\]
and hence we double the size of the matrices (cf.\ \cite[p.\ 87]{EK}), we state a more general result as follows.
\begin{lemma}[\mbox{\cite[Theorem $2.1$]{MB2}}]\label{L24}
Let $H$ be a $\mathrm{MH}(n_1,m_1)$ and $K$ a $\mathrm{MH}(n_2,m_2)$. Then $H\otimes K$ is a $\mathrm{MH}(n_1n_2,(m_1m_2,m_1n_2,m_2n_1))$.
\end{lemma}
The proof is taken from \cite{EK}.
\begin{proof}
Observe that $HH^T=n_1I_{n_1}+m_1X$ and $KK^T=n_2I_{n_2}+m_2Y$ for some integer matrices $X$ and $Y$. Therefore
\[(H\otimes K)(H\otimes K)^T=n_1n_2I_{n_1n_2}+m_1n_2X\otimes I_{n_2}+m_2n_1I_{n_1}\otimes Y+m_1m_2X\otimes Y.\qedhere\]
\end{proof}
With the aid of these results it is easy to decide the existence of $\mathrm{MH}(n,m)$ matrices for $m=2,3,4$ and $6$. 
\begin{theorem}[\cite{MB1}, \cite{MB2}]
Let $n\geq 2$. Then
\begin{enumerate}[$(a)$]
\item a $\mathrm{MH}(n,2)$ exist if and only if $n$ is even;
\item a $\mathrm{MH}(n,3)$ exist if and only if $n\not\equiv 5\ (\mathrm{mod}\ 6)$;
\item a $\mathrm{MH}(n,4)$ exist if and only if $n=2$ or $n$ is doubly even;
\item a $\mathrm{MH}(n,6)$ exist if and only if $n$ is even.
\end{enumerate}
\end{theorem}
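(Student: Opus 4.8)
The plan is to treat each of the four biconditionals by separating the necessity of the stated congruence condition from its sufficiency. For the necessity I would rely on the obstructions already established, and for the sufficiency I would exhibit explicit matrices assembled from $J$, $J-2I$, $F_2$ and Kronecker products.

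For the necessary conditions in parts $(a)$, $(c)$ and $(d)$ the moduli $2$, $4$ and $6$ are even, so Lemma \ref{L21}$(a)$ applies for $n\geq3$: since $2$ and $6$ are even it forces $n$ to be even, while $4\equiv0\ (\mathrm{mod}\ 4)$ forces $n\equiv0\ (\mathrm{mod}\ 4)$, i.e.\ $n$ doubly even. The order $n=2$ is realized in every case by $F_2$, which, being a real Hadamard matrix, is an $\mathrm{MH}(2,m)$ for each $m$; this accounts for the exceptional value in $(c)$. For part $(b)$ the point is to rule out $n\equiv5\ (\mathrm{mod}\ 6)$: such an $n$ is odd and satisfies $(n,3)=1$ with $n\equiv2\ (\mathrm{mod}\ 3)$, so Lemma \ref{L22} would force $n$ to be a quadratic residue modulo $3$; but the squares modulo $3$ are $0$ and $1$, so $n\equiv2$ cannot be one, a contradiction.

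For sufficiency I would first harvest everything obtainable from Lemma \ref{L23}, which yields an $\mathrm{MH}(n,m)$ whenever $n\equiv0$ or $n\equiv4\ (\mathrm{mod}\ m)$ via $J$ and $J-2I$ respectively. Together with $F_2$ for $n=2$ this already settles $(a)$ (every even $n$ satisfies $n\equiv0\ (\mathrm{mod}\ 2)$) and $(c)$ (every doubly even $n$ satisfies $n\equiv0\ (\mathrm{mod}\ 4)$). For $(b)$ the residues $n\equiv0,3\ (\mathrm{mod}\ 6)$ give $n\equiv0\ (\mathrm{mod}\ 3)$ and use $J$, while $n\equiv1,4\ (\mathrm{mod}\ 6)$ give $n\equiv1\equiv4\ (\mathrm{mod}\ 3)$ and use $J-2I$; for $(d)$ the residues $n\equiv0,4\ (\mathrm{mod}\ 6)$ are covered by $J$ and $J-2I$. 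In both $(b)$ and $(d)$ the single class that escapes these direct constructions is $n\equiv2\ (\mathrm{mod}\ 6)$.

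This leftover case is the crux of the argument, and I would handle it uniformly for $(b)$ and $(d)$ with a Kronecker product. Writing $n=2k$ with $k=n/2\equiv1\ (\mathrm{mod}\ 3)$, the matrix $J-2I$ of order $k$ is an $\mathrm{MH}(k,3)$ by Lemma \ref{L23}. Applying Lemma \ref{L24} to $F_2$ and this matrix, with $(n_1,m_1)=(2,0)$ and $(n_2,m_2)=(k,3)$, produces an $\mathrm{MH}\left(n,(0,0,6)\right)=\mathrm{MH}(n,6)$; since an $\mathrm{MH}(n,6)$ is a fortiori an $\mathrm{MH}(n,3)$, this simultaneously closes the missing case of $(b)$ and of $(d)$. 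The main obstacle is thus concentrated entirely in the $n\equiv2\ (\mathrm{mod}\ 6)$ construction, everything else being a direct appeal to the lemmas; one should also check the boundary orders ($n=2$ in each part, and $k=1$ in the Kronecker step) to confirm the small cases behave.
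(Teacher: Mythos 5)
Your proposal is correct and follows essentially the same route as the paper: necessity via Lemma \ref{L21}(a) for parts $(a)$, $(c)$, $(d)$ and Lemma \ref{L22} for part $(b)$; sufficiency via Lemma \ref{L23} in all residue classes except $n\equiv2\ (\mathrm{mod}\ 6)$, which the paper likewise settles by taking the Kronecker product of $F_2$ with an $\mathrm{MH}(3k+1,3)$ matrix through Lemma \ref{L24}. Your explicit computation of the resulting modulus $(0,0,6)=6$ and your attention to the boundary order $n=2$ are just slightly more detailed renderings of the paper's argument.
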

\begin{proof}
On the one hand, the necessary conditions described here follow from Lemma \ref{L21} and Lemma \ref{L22} in the cases $(a), (c), (d)$ and $(b)$, respectively.

On the other hand, the existence of these modular Hadamard matrices follows from Lemma \ref{L23} almost immediately, except for the case $n\equiv\ 2\ (\mathrm{mod}\ 6)$ in parts $(b)$ and $(d)$. We construct such matrices via Lemma \ref{L24} by taking the Kronecker product of the $2\times 2$ Hadamard matrix $F_2$ with $\mathrm{MH}(3k+1,3)$ matrices.
\end{proof}
We remark that combination of the above ideas lead to the determination of $\mathrm{MH}(n,12)$ matrices as well \cite{EK}.
\section{Modular symmetric designs and Hadamard matrices modulo $5$}
In this section we introduce modular symmetric designs and use them to investigate the existence of $5$-modular Hadamard matrices. The next result easily follows from the theory we reviewed in Section $2$.
\begin{corollary}\label{trivC}
There exist $\mathrm{MH}(n,5)$ matrices if $n\equiv 0,4,5,8,9\ (\mathrm{mod}\ 10)$. There do not exist $\mathrm{MH}(n,5)$ matrices if $n\equiv 3,7\ (\mathrm{mod}\ 10)$.
\end{corollary}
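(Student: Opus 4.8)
The plan is to reduce both assertions to congruence conditions modulo $5$ together with the parity of $n$, and then to dispatch each residue class using the lemmas of Section~$2$. Since knowing $n \bmod 10$ records both $n \bmod 5$ and the parity of $n$, I would organize the whole argument around the value of $n \bmod 5$ in each listed class, applying Lemma~\ref{L22} for the non-existence claims and Lemmas~\ref{L23} and~\ref{L24} for the existence claims.

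For the non-existence part I would treat $n\equiv 3,7\ (\mathrm{mod}\ 10)$ simultaneously. In both cases $n$ is odd and coprime to $5$: the class $n\equiv 3\ (\mathrm{mod}\ 10)$ gives $n\equiv 3\ (\mathrm{mod}\ 5)$, and $n\equiv 7\ (\mathrm{mod}\ 10)$ gives $n\equiv 2\ (\mathrm{mod}\ 5)$, neither of which is $\equiv 0$. Hence Lemma~\ref{L22} applies and shows that a $\mathrm{MH}(n,5)$ could exist only if $n$ were a quadratic residue modulo $5$. Since the nonzero quadratic residues modulo $5$ are exactly $\{1,4\}$, while $2$ and $3$ are non-residues, no such matrix exists, settling both classes.

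For the existence part, the four classes $n\equiv 0,4,5,9\ (\mathrm{mod}\ 10)$ are immediate: they correspond to $n\equiv 0,4,0,4\ (\mathrm{mod}\ 5)$ respectively, so in each case $n\equiv 0$ or $4\ (\mathrm{mod}\ 5)$ and Lemma~\ref{L23} produces the matrix directly (via $J$ or $J-2I$). The remaining class $n\equiv 8\ (\mathrm{mod}\ 10)$ is the only one not covered by a ready-made lemma, and I expect it to be the one genuine step of the proof. Here $n$ is even with $n/2\equiv 4\ (\mathrm{mod}\ 5)$, so Lemma~\ref{L23} furnishes a $\mathrm{MH}(n/2,5)$ matrix $K$. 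I would then form the Kronecker product $F_2\otimes K$ with the real Hadamard matrix $F_2=\mathrm{MH}(2,0)$ and invoke Lemma~\ref{L24}; the resulting modulus is $\gcd(0\cdot 5,\,0\cdot(n/2),\,5\cdot 2)=\gcd(0,0,10)=10$, so $F_2\otimes K$ is a $\mathrm{MH}(n,10)$ and \emph{a fortiori} a $\mathrm{MH}(n,5)$.

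Thus all five existence classes and both non-existence classes are accounted for. The only point demanding care is tracking the modulus produced by the doubling construction in the $n\equiv 8\ (\mathrm{mod}\ 10)$ case; every other class is a direct bookkeeping of residues against Lemmas~\ref{L22} and~\ref{L23}, so I do not anticipate any serious obstacle.
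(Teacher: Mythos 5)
Your proposal is correct and follows essentially the same route as the paper: Lemma~\ref{L23} for the classes $n\equiv 0,4,5,9\ (\mathrm{mod}\ 10)$, the Kronecker product of $F_2$ with a $\mathrm{MH}(5k+4,5)$ matrix via Lemma~\ref{L24} for $n\equiv 8\ (\mathrm{mod}\ 10)$, and Lemma~\ref{L22} with the quadratic residues $\{1,4\}$ modulo $5$ for the exclusion of $n\equiv 3,7\ (\mathrm{mod}\ 10)$. Your explicit verification that the Kronecker construction yields modulus $(0,0,10)=10$ is a detail the paper leaves implicit, but it is the same argument.
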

\begin{proof}
The existence of orders $n\equiv 0,4,5,9\ (\mathrm{mod}\ 10)$ follow from Lemma \ref{L23}. Matrices of order $n\equiv 8\ (\mathrm{mod}\ 10)$ can be obtained by considering the Kronecker product of a $\mathrm{MH}(5k+4,5)$ matrix with the $2\times 2$ Hadamard matrix $F_2$ via Lemma \ref{L24}. On the other hand, the cases $n\equiv 3,7\ (\mathrm{mod}\ 10)$ are eliminated by Lemma \ref{L22}.
\end{proof}
It appears that addressing the remaining cases is a nontrivial problem. In particular, we have some further nonexistence results in the cases $n\equiv 1,6\ (\mathrm{mod}\ 10)$ due to Lemma \ref{L21}.
\begin{corollary}[cf.\ \mbox{\cite[Theorem $3.1$]{MB2}}]\label{NEX}
There does not exist $\mathrm{MH}(6,5)$ and $\mathrm{MH}(11,5)$ matrices.
\end{corollary}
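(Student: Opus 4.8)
The plan is to derive both nonexistence results directly from the counting inequality in Lemma~\ref{L21}(b), since $m=5$ is odd and neither $6$ nor $11$ is divisible by $5$. First I would observe that $6\equiv 11\equiv 1\ (\mathrm{mod}\ 5)$, so the two orders are governed by exactly the same residue computation, and that the hypotheses of Lemma~\ref{L21}(b) are met: $5$ is odd and $n\not\equiv 0\ (\mathrm{mod}\ 5)$ in both cases.

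Next I would specialize the lemma to $m=5$. Here $\varphi(5)=4$, so the exponent is $\varphi(5)-2=2$ and the relevant residue class is $r\equiv 2^{2}n\equiv 4n\ (\mathrm{mod}\ 5)$. For $n\equiv 1\ (\mathrm{mod}\ 5)$ this yields $r\equiv 4\ (\mathrm{mod}\ 5)$, and since the lemma requires $1\le r\le 4$, we are forced to take $r=4$. Consequently the four pair-counts $A,B,C,D$ from the proof of Lemma~\ref{L21} are each nonzero and at least $r=4$ modulo $5$, so the lemma delivers the bound $n=A+B+C+D\ge 4r=16$.

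Finally I would simply note that both $6<16$ and $11<16$ violate this inequality, which rules out $\mathrm{MH}(6,5)$ and $\mathrm{MH}(11,5)$ matrices. The computation is entirely routine and I do not anticipate any genuine obstacle; the only point deserving care is confirming that $A,B,C,D$ are all nonzero modulo $5$ so that each is at least $r$. It is worth emphasizing that the obstruction here comes from the generic bound $n\ge 4r$ rather than from Lemma~\ref{L22}: for $n=11$ one has $(11,5)=1$ with $11\equiv 1$ a quadratic residue modulo $5$, so the quadratic-residue test does not detect the nonexistence, and Lemma~\ref{L21}(b) is essential.
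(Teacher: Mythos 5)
Your argument is correct and is essentially identical to the paper's proof: both invoke Lemma~\ref{L21}(b) with $\varphi(5)=4$ and $n\equiv 1\ (\mathrm{mod}\ 5)$ to get $r=4$ and hence the bound $n\geq 16$, which $6$ and $11$ both violate. Your closing remark that Lemma~\ref{L22} cannot detect these cases (since $1$ is a quadratic residue modulo $5$) is a correct and worthwhile observation, though the paper does not make it explicitly.
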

\begin{proof}
Suppose, to the contrary, that a $\mathrm{MH}(n,m)$ exists with $m=5$ and $n=6,11$. Then, by an application of Lemma \ref{L21} with noting that $\varphi(5)=4$ and $n\equiv 1\ (\mathrm{mod}\ m)$ in both cases we arrive to the (same) lower bound $n\geq 16$, a contradiction.
\end{proof}
Therefore the real challenge is to construct $\mathrm{MH}(n,5)$ matrices when $n\equiv 1,2$ or $6\ (\mathrm{mod}\ 10)$. To obtain interesting examples of modular Hadamard matrices it is natural to consider combinatorial designs \cite{RP}, \cite{M3}, \cite{MB1} and \cite{MB2}. It turns out, however, that the relevant mathematical object is the following relaxed concept of modular symmetric designs.
\begin{definition}[cf.\ \cite{BDS}]
Let $m,v\geq2$ be an integer. A $v\times v$ matrix $D$ with entries $0$ or $1$ is called an $m$-modular symmetric design, if there exist integer numbers $k$ and $\lambda$, such that $DD^T\equiv (k-\lambda)I+\lambda J\ (\mathrm{mod}\ m)$ and $DJ\equiv JD\equiv kJ\ (\mathrm{mod}\ m)$. We denote these objects by $(v,k,\lambda;m)$, and refer to them as $(v,k,\lambda;m)$ designs.
\end{definition}
In other words, $D$ is an $m$-modular symmetric design, if the number of $1$s in each row and column is congruent to $k\ (\mathrm{mod}\ m)$, and the number of vertical pairs $\left[1,1\right]^T$ within two different rows is congruent to $\lambda\ (\mathrm{mod}\ m)$. Clearly, any symmetric $(v,k,\lambda)$ design is a $(v,k,\lambda;m)$ design for all $m\geq2$. Other examples can be obtained from modular difference sets \cite{BDS}. Note, however, that modular symmetric designs constructed from modular difference sets have the same number of $1$s in each row and column, thus constitute a special case of our concept. The reader is advised to consult \cite[Chapter II.6]{CRC}, where the general theory of symmetric designs is presented, along with detailed summarizing tables of the parameters of the known symmetric designs of small orders. We give here a non-trivial example as follows.
\begin{example}\label{EX1}
Consider the $(13,4,1)$ design $R$, generated by the cyclic permutations of the row vector  $[1,0,1,1,0,0,0,1,0,0,0,0,0]$. Then, the following is a $(26,1,2;5)$$:$
\[\left[\begin{array}{cc}
R & J-I\\
J-I & J-R^T
\end{array}\right].\]
\end{example}
Let $n\geq2$, and consider a normalized $\mathrm{MH}(n,m)$ matrix $H$. By discarding its first row and column we obtain the core of $H$. Under some mild assumptions we can obtain an $m$-modular design from the core of $H$ with unique parameters, as follows.
\begin{lemma}\label{corel}
Let $m,n\geq 3$, $(m,n)=1$. If $H$ is a normalized $\mathrm{MH}(n,m)$, whose core is denoted by $C$, then $D=(C+J)/2$ is a $(n-1,2^{\varphi(m)-1}(n-2),2^{\varphi(m)-2}(n-4);m)$ design.
\end{lemma}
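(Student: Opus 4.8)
The plan is to work directly with the block form of the normalized matrix and to translate the modular orthogonality of $H$ into relations for the core $C$, and then for $D=(C+J)/2$. First I would write
\[H=\begin{pmatrix}1 & \mathbf{1}^T\\ \mathbf{1} & C\end{pmatrix},\]
with $\mathbf{1}$ the all-ones vector of length $n-1$, and expand $HH^T\equiv nI\ (\mathrm{mod}\ m)$ in these blocks. The bottom-right block gives $J+CC^T\equiv nI$, that is $CC^T\equiv nI-J\ (\mathrm{mod}\ m)$, while the off-diagonal blocks (row $1$ against the remaining rows) force every row sum of $C$ to be $\equiv -1\ (\mathrm{mod}\ m)$. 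To obtain the analogous column statements I would invoke the hypothesis $(m,n)=1$: since $n$ is then invertible modulo $m$, the relation $HH^T\equiv nI$ makes $H$ invertible modulo $m$ with inverse $n^{-1}H^T$, whence $H^TH\equiv nI\ (\mathrm{mod}\ m)$ as well; the same block expansion applied to $H^TH$ shows that every column sum of $C$ is also $\equiv -1\ (\mathrm{mod}\ m)$.

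Next I would pass to $D=(C+J)/2$, which is a genuine $0/1$ matrix since each entry of $C+J$ lies in $\{0,2\}$. Rather than invert $2$ and $4$ explicitly I find it cleaner to clear denominators. Each row (and likewise each column) sum of $D$ equals $\tfrac12\big((n-1)+s\big)$, where $s$ is the corresponding row (column) sum of $C$; as $s\equiv -1$, twice each such sum of $D$ is $\equiv n-2\ (\mathrm{mod}\ m)$. For the product I would expand
\[4DD^T=(C+J)(C+J)^T=CC^T+CJ+JC^T+J^2,\]
and substitute $CC^T\equiv nI-J$, $CJ\equiv JC^T\equiv -J$ (both given by the row/column sums of $C$), and $J^2=(n-1)J$, obtaining $4DD^T\equiv nI+(n-4)J\ (\mathrm{mod}\ m)$.

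Finally I would check that the stated parameters fit, which reduces to Euler's congruence $2^{\varphi(m)}\equiv 1\ (\mathrm{mod}\ m)$ (valid since $(2,m)=1$; in the intended application $m=5$). With $k=2^{\varphi(m)-1}(n-2)$ and $\lambda=2^{\varphi(m)-2}(n-4)$ one verifies the elementary identities $2k\equiv n-2$, $4\lambda\equiv n-4$, and $4(k-\lambda)\equiv n\ (\mathrm{mod}\ m)$. Multiplying the target relations $DJ\equiv JD\equiv kJ$ and $DD^T\equiv(k-\lambda)I+\lambda J$ by $2$ and by $4$ respectively turns them into exactly the congruences established above, so the equivalence is immediate once $2$ (hence $4$) is invertible modulo $m$.

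The computation itself is routine; the only genuine point requiring care is the passage from $HH^T\equiv nI$ to $H^TH\equiv nI$, which is where $(m,n)=1$ is used and which is needed to control the column sums of the not-necessarily-symmetric core. A secondary subtlety is that dividing by $2$ and $4$ modulo $m$ is legitimate only because $(2,m)=1$; this is precisely what the factors $2^{\varphi(m)-1}$ and $2^{\varphi(m)-2}$ encode, so I would present the last step as clearing these denominators rather than as inversion, keeping the argument both valid and transparent.
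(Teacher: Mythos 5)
Your proof is correct and follows essentially the same route as the paper's: derive $CJ\equiv JC\equiv -J$ and $CC^T\equiv nI-J\ (\mathrm{mod}\ m)$ from $HH^T\equiv H^TH\equiv nI\ (\mathrm{mod}\ m)$, compute $2DJ$ and $4DD^T$, and clear the factors of $2$ and $4$ via $2^{\varphi(m)}\equiv 1\ (\mathrm{mod}\ m)$. The only point you leave implicit is that $(2,m)=1$ actually follows from the hypotheses rather than being special to $m=5$: by Lemma \ref{L21}, an even $m$ would force $n$ to be even, contradicting $(m,n)=1$, so $m$ is necessarily odd --- which is the opening observation of the paper's own proof.
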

\begin{proof}
First observe that $m$ is necessarily odd by Lemma \ref{L21}. As $(m,n)=1$ we have $HH^T\equiv H^TH\equiv nI\ (\mathrm{mod}\ m)$. In particular, the columns of $H$ are pairwise orthogonal modulo $m$. It follows that $CJ\equiv JC\equiv -J\ (\mathrm{mod}\ m)$, and consequently
\[2DJ\equiv 2JD\equiv JC+J^2\equiv(n-2)J\ (\mathrm{mod}\ m).\]
Secondly, we have $CC^T\equiv nI-J\ (\mathrm{mod}\ m)$, and hence
\[4DD^T=(C+J)(C+J)^T=CC^T+JC^T+CJ+(n-1)J\equiv nI+(n-4)J\ (\mathrm{mod}\ m).\]
The statement follows after multiplying these equations by $2^{\varphi(m)-1}$ and $2^{\varphi(m)-2}$, respectively.
\end{proof}
Combinatorial designs are extremely useful for our purposes. We state here a simple result as follows.
\begin{lemma}[cf.\ \mbox{\cite[Theorem 2.5]{MB2}}]\label{const1}
Let $D$ be a $(v,k,\lambda;m)$ design. Then the matrix $2D-J$ is a $\mathrm{MH}(v,m)$ if and only if $v\equiv 4(k-\lambda)\ (\mathrm{mod}\ m)$.
\end{lemma}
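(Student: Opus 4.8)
The plan is to compute $HH^T$ directly, where $H=2D-J$, reduce it modulo $m$ using the two defining congruences of the design, and then read off exactly when the outcome is congruent to $vI$.

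First I would observe that since $D$ has entries in $\{0,1\}$, the matrix $H=2D-J$ has entries in $\{-1,1\}$, so it is a legitimate candidate for an $\mathrm{MH}(v,m)$; the only thing left to verify is the orthogonality condition $HH^T\equiv vI\ (\mathrm{mod}\ m)$. Using $J^T=J$, I would expand
\[ HH^T=(2D-J)(2D-J)^T=4DD^T-2DJ-2JD^T+J^2. \]

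Next I would substitute the modular data. The design gives $DD^T\equiv(k-\lambda)I+\lambda J$ and $DJ\equiv kJ\ (\mathrm{mod}\ m)$; the only step requiring a moment's care is the mixed term $JD^T$, which I would handle by transposing, $JD^T=(DJ)^T\equiv(kJ)^T=kJ\ (\mathrm{mod}\ m)$. Together with $J^2=vJ$ this yields
\[ HH^T\equiv 4(k-\lambda)I+4\lambda J-4kJ+vJ=4(k-\lambda)I+\bigl(v-4(k-\lambda)\bigr)J\ (\mathrm{mod}\ m). \]

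Finally I would compare this with $vI$. Writing the difference as $\bigl(v-4(k-\lambda)\bigr)(J-I)$ and noting that $J-I$ has nonzero off-diagonal entries since $v\geq2$, the congruence $HH^T\equiv vI\ (\mathrm{mod}\ m)$ holds if and only if $v-4(k-\lambda)\equiv0\ (\mathrm{mod}\ m)$, that is, $v\equiv4(k-\lambda)\ (\mathrm{mod}\ m)$; this settles both directions of the equivalence simultaneously. I do not anticipate a genuine obstacle here, as the argument is a short bilinear computation. The only place to be attentive is the bookkeeping of the four terms in the expansion together with the transpose identity for $JD^T$, and the observation that the diagonal entries of $4(k-\lambda)I+(v-4(k-\lambda))J$ automatically reduce to $v$, so that the stated congruence governs precisely the off-diagonal part.
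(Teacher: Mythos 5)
Your computation is correct and is exactly the paper's proof, which consists of the same one-line expansion $(2D-J)(2D-J)^T\equiv 4(k-\lambda)I+(v-4k+4\lambda)J$ followed by comparison with $vI$; you have merely spelled out the intermediate steps and the linear-independence observation that justifies the ``only if'' direction. No issues.
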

\begin{proof}
\[(2D-J)(2D-J)^T\equiv 4(k-\lambda)I+(v-4k+4\lambda)J\equiv vI\ (\mathrm{mod}\ m).\qedhere\]
\end{proof}
However, it is difficult to obtain combinatorial designs, and thus the applications of Lemma \ref{const1} are somewhat limited. To get more powerful construction methods, we combine two $m$-modular symmetric designs as follows.
\begin{definition}
Let $D_1$ and $D_2$ be a $(v_1,k_1,\lambda_1;m)$ and a $(v_2,k_2,\lambda_2;m)$ design, respectively. Then their direct sum, denoted by $D_1\oplus D_2$, is the following block matrix of size $v_1+v_2$:
\[\left[\begin{array}{cc}
D_1 & J\\
J^T & D_2
\end{array}\right].\]
\end{definition}
Note that the direct sum of modular designs is not a modular design in general. Nevertheless, it is worthwhile to characterize the cases when the direct sum of two modular designs leads to a modular Hadamard matrix.
\begin{lemma}\label{lemlemma}
Let $v_1,v_2\geq 2$, $D_1$ and $D_2$ be a $(v_1,k_1,\lambda_1;m)$ and a $(v_2,k_2,\lambda_2;m)$ design, respectively. Then $2(D_1\oplus D_2)-J$ is a $\mathrm{MH}(v_1+v_2,m)$ if and only if
\begin{align*}
v_2&\equiv-v_1+4k_1-4\lambda_1\ (\mathrm{mod}\ m),\\
2k_2&\equiv 2k_1-4\lambda_1\ (\mathrm{mod}\ m),\\
4\lambda_2&\equiv-4\lambda_1\ (\mathrm{mod}\ m).
\end{align*}
\end{lemma}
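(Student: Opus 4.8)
The plan is to expand $M:=2(D_1\oplus D_2)-J$ in $2\times 2$ block form and compute $MM^T$ block by block modulo $m$, then read off exactly when $MM^T\equiv(v_1+v_2)I$. First I would record the block shape. Since the off-diagonal blocks of $D_1\oplus D_2$ are all-ones and $2J-J=J$, writing $C_i=2D_i-J$ for the $\pm1$ core of each design gives
\[M=\left[\begin{array}{cc} C_1 & J\\ J & C_2\end{array}\right],\]
where the two off-diagonal $J$'s are the rectangular $v_1\times v_2$ and $v_2\times v_1$ all-ones blocks. Then
\[MM^T=\left[\begin{array}{cc} C_1C_1^T+JJ^T & C_1J^T+JC_2^T\\ JC_1^T+C_2J^T & JJ^T+C_2C_2^T\end{array}\right],\]
and the goal is to decide when this is congruent to $(v_1+v_2)I$ modulo $m$.

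Next I would evaluate each block using only the defining congruences $D_iD_i^T\equiv(k_i-\lambda_i)I+\lambda_i J$ and $D_iJ\equiv JD_i\equiv k_iJ\ (\mathrm{mod}\ m)$, together with the elementary size-sensitive identities $J_{v_1\times v_2}J_{v_2\times v_1}=v_2J_{v_1}$ and $J_{v_2\times v_1}J_{v_1\times v_2}=v_1J_{v_2}$. Expanding $C_iC_i^T=4D_iD_i^T-2D_iJ-2JD_i^T+JJ^T$, the $(1,1)$ block becomes $4(k_1-\lambda_1)I+(4\lambda_1-4k_1+v_1+v_2)J$ modulo $m$, and symmetrically for the $(2,2)$ block; the off-diagonal $(1,2)$ block, after substituting $C_i=2D_i-J$ and using the row-sum congruences, collapses to $(2k_1+2k_2-v_1-v_2)J\ (\mathrm{mod}\ m)$. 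Since a matrix of the form $\alpha I+\beta J$ is congruent to $(v_1+v_2)I$ precisely when $\beta\equiv0$ and $\alpha\equiv v_1+v_2$, and since for the two diagonal blocks these two requirements coincide, matching $MM^T$ to $(v_1+v_2)I$ amounts exactly to the vanishing of the three $J$-coefficients, namely
\[v_1+v_2\equiv 4k_1-4\lambda_1,\qquad v_1+v_2\equiv 4k_2-4\lambda_2,\qquad v_1+v_2\equiv 2k_1+2k_2\ (\mathrm{mod}\ m).\]

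Finally I would verify that this system is equivalent to the displayed triple. The first of the three congruences above is literally the stated condition $v_2\equiv-v_1+4k_1-4\lambda_1$; subtracting it from the third yields $2k_2\equiv 2k_1-4\lambda_1$; and feeding both of these into the second yields $4\lambda_2\equiv-4\lambda_1$. Each implication reverses by the same substitutions, so the two systems coincide, which is the assertion of the lemma.

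I do not expect a genuine obstacle here: the argument is a bookkeeping computation with block matrices. The only places demanding care are twofold — keeping the rectangular all-ones products $J_{v_1\times v_2}J_{v_2\times v_1}=v_2J$ straight from the square ones $J_{v_1}^2=v_1J$, since a wrong size silently corrupts the split between the $I$- and $J$-coefficients in the diagonal blocks; and recognizing that the three congruences arising naturally from the block computation are not matched term by term with the asymmetric-looking stated triple but must be recombined by the elementary linear manipulations above.
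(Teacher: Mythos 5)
Your proposal is correct and follows essentially the same route as the paper: the paper's proof consists precisely of the three intermediate congruences $v_1+v_2\equiv 4k_i-4\lambda_i$ and $v_1+v_2\equiv 2k_1+2k_2\ (\mathrm{mod}\ m)$ read off from the orthogonality conditions (using that $I$ and $J$ are linearly independent for $v_i\geq 2$), followed by the same elementary recombination into the stated triple. Your block-by-block expansion just makes explicit the computation the paper leaves to the reader, and all of your intermediate expressions check out.
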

\begin{proof}
The formulas follow directly from the orthogonality conditions of the rows of $2(D_1\oplus D_2)-J$ and from the fact that for $v_1,v_2\geq 2$ and $m\geq2$ the matrices $J$ and $I$ are linearly independent. In particular, we find that
\begin{gather*}
v_1+v_2-4k_1+4\lambda_1\equiv0\ (\mathrm{mod}\ m),\\
v_1+v_2-4k_2+4\lambda_2\equiv0\ (\mathrm{mod}\ m),\\
v_1+v_2-2k_1-2k_2\equiv0\ (\mathrm{mod}\ m),
\end{gather*}
must hold. The desired result follows after some easy manipulation.
\end{proof}
We provide the reader with an illustrative example as follows.
\begin{example}\label{HEL}
Here we construct a $\mathrm{MH}(86,5)$ matrix. Let $D_1$, $D_2$ and $D_3$ be a $(16,6,2)$, a $(35,17,8)$ and a $(36,21,12)$ design, respectively $($see \cite[p.\ 740]{CRC} and \cite[pp.\ 273--274]{CRC}$)$. First we consider $2(D_1\oplus D_2)-J$, which is a $\mathrm{MH}(51,5)$ by Lemma \ref{lemlemma}. The core of this matrix, after normalization, leads to a $(50,2,3;5)$ design $D_4$ by Lemma \ref{corel}. The desired $\mathrm{MH}(86,5)$ matrix can be obtained by considering $2(D_3\oplus D_4)-J$.
\end{example}
Now we give a construction of $\mathrm{MH}(n,5)$ matrices.
\begin{proposition}\label{P39}
There exist $\mathrm{MH}(n,5)$ matrices of order $n\equiv 1\ (\mathrm{mod}\ 5)$ if and only if $n\neq6,11$.
\end{proposition}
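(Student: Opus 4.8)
The plan is to treat the two implications separately. The ``only if'' direction is already in hand: Corollary~\ref{NEX} rules out $n=6$ and $n=11$, and among the orders $n\equiv1\ (\mathrm{mod}\ 5)$ these are the only ones to be excluded (the order $n=1$ is realized trivially by $[1]$). For the converse I would build every admissible order by an induction whose single step raises the order by $15$. The point is that the orders $n\equiv 1\ (\mathrm{mod}\ 5)$ with $n\geq 16$ are precisely those lying in the residue classes $1,6,11\ (\mathrm{mod}\ 15)$, with smallest representatives $16,21,26$; hence three base cases together with a $+15$ step suffice to reach all of them.

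For the base cases I would produce three designs of orders $16$, $21$ and $26$, each meeting the hypothesis $v\equiv 4(k-\lambda)\ (\mathrm{mod}\ 5)$ of Lemma~\ref{const1}. The symmetric $(16,6,2)$ design and the projective plane $(21,5,1)$ both have $k-\lambda\equiv 4$ and $v\equiv 1\ (\mathrm{mod}\ 5)$, while the $(26,1,2;5)$ design of Example~\ref{EX1} has $k-\lambda\equiv -1\equiv 4$ and $v\equiv 1$. Lemma~\ref{const1} then yields $\mathrm{MH}(16,5)$, $\mathrm{MH}(21,5)$ and $\mathrm{MH}(26,5)$ at once.

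The inductive step chains the core construction to a single direct sum. Given a normalized $\mathrm{MH}(n,5)$ with $n\equiv1\ (\mathrm{mod}\ 5)$, so that $(5,n)=1$, Lemma~\ref{corel} turns its core into a design $D$ of order $n-1$ with $k\equiv 2$ and $\lambda\equiv 3\ (\mathrm{mod}\ 5)$. Taking the direct sum $D\oplus D'$ of $D$ with the $(16,6,2)$ design $D'$, whose reduced parameters are $k\equiv 1$, $\lambda\equiv 2\ (\mathrm{mod}\ 5)$, I would check the three congruences of Lemma~\ref{lemlemma}; they hold, so $2(D\oplus D')-J$ is an $\mathrm{MH}(n+15,5)$. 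Iterating this step from the three base cases produces $\mathrm{MH}(n,5)$ for every $n\equiv1\ (\mathrm{mod}\ 5)$ with $n\geq16$, completing the proof.

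I expect the only real difficulty to be assembling the three base-case designs with the exact residues dictated by Lemma~\ref{const1}; the order $26$ is the delicate one, since no convenient classical symmetric design of that order fits, which is precisely why the genuinely modular object of Example~\ref{EX1} is required. After that, verifying the congruences in Lemmas~\ref{corel}, \ref{const1} and \ref{lemlemma} is purely mechanical, and the covering of the classes $1,6,11\ (\mathrm{mod}\ 15)$ by the $+15$ recursion is elementary.
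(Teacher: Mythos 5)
Your proof is correct; I checked the computations. The three base cases all satisfy the hypothesis $v\equiv 4(k-\lambda)\ (\mathrm{mod}\ 5)$ of Lemma~\ref{const1} (each has $k-\lambda\equiv 4$ and $v\equiv 1$), the core of a normalized $\mathrm{MH}(n,5)$ with $n\equiv 1\ (\mathrm{mod}\ 5)$ is an $(n-1,2,3;5)$ design since $8(n-2)\equiv 2$ and $4(n-4)\equiv 3\ (\mathrm{mod}\ 5)$, and pairing it with the $(16,6,2)$ design passes all three tests of Lemma~\ref{lemlemma}: $-v_1+4k_1-4\lambda_1\equiv 0+8-12\equiv 1\equiv 16$, $2k_1-4\lambda_1\equiv -8\equiv 2\equiv 2k_2$, and $-4\lambda_1\equiv -12\equiv 3\equiv 4\lambda_2\ (\mathrm{mod}\ 5)$; the classes $1,6,11\ (\mathrm{mod}\ 15)$ indeed exhaust $n\equiv 1\ (\mathrm{mod}\ 5)$. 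Your route is, however, genuinely different from the paper's. The paper does not run a $+15$ induction: it builds the families $\mathrm{MH}(20k+16,5)$ by doubling $J-2I$ twice, extracts $(20k+15,2,3;5)$ core designs, and direct-sums these with \emph{three} auxiliary designs --- $(16,6,2)$, $(26,1,2;5)$ and $(91,81,72)$ --- to reach the classes $31$, $41$ and $106\ (\mathrm{mod}\ 20)$, after which it must dispose of $n=21,26,46,66$ and $86$ by separate ad hoc constructions (the last via the two-stage Example~\ref{HEL}). Your version needs only the single auxiliary design $(16,6,2)$, three uniform base cases, and no exceptional small orders, so it is distinctly leaner; what the paper's arrangement buys is that its seed matrices are the completely explicit $J-2I$, whereas your induction repeatedly invokes normalization followed by Lemma~\ref{corel} --- but the paper itself iterates exactly that step in Example~\ref{HEL}, so this costs nothing. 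The one point you should state explicitly is that every $\mathrm{MH}(n,5)$ can be normalized by negating suitable rows and columns (which preserves $HH^T$ modulo $5$), since Lemma~\ref{corel} is stated only for normalized matrices and your inductive step feeds it the output of the previous step.
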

\begin{proof}
Observe that if $n=20k+16=4(5k+4)$ then we can get $\mathrm{MH}(20k+16,5)$ matrices for every $k\geq 0$ by doubling twice the $\mathrm{MH}(5k+4,5)$ matrices $J-2I$ via Lemma \ref{L24}. Now we use these matrices, or more precisely, the corresponding $5$-modular designs (arising from Lemma \ref{corel}) with parameters $(20k+15,2,3;5)$ as follows. We take their direct sum with the designs $(26,1,2;5)$, $(91,81,72)$ and $(16,6,2)$ to obtain, after a reference to Lemma \ref{lemlemma}, $\mathrm{MH}(20k+41,5)$, $\mathrm{MH}(20k+106,5)$ and $\mathrm{MH}(20k+31,5)$, respectively. The first modular design is provided in Example \ref{EX1}, the second one is the complement of a projective plane of size $91$, while the third one is a Menon design of size $16$ \cite{CRC}.

Finally, we take care of the small order cases as follows: a $\mathrm{MH}(1,5)$ is just the $1\times 1$ matrix $F_1=\left[\begin{array}{c}1\end{array}\right]$; $\mathrm{MH}(6,5)$ and $\mathrm{MH}(11,5)$ do not exist by Corollary \ref{NEX}. A $\mathrm{MH}(21,5)$ and $\mathrm{MH}(26,5)$ can be constructed from a $(21,5,1)$ and from the $(26,1,2;5)$ design (given in Example \ref{EX1}) via Lemma \ref{const1}, respectively. A $\mathrm{MH}(46,5)$ and a $\mathrm{MH}(66,5)$ can be obtained via Lemma \ref{lemlemma} as follows. Let $D_1, D_2, D_3$ and $D_4$ be a $(26,1,2;5)$, $(20,2,3;5)$, $(21,5,1)$ and a $(45,33,24)$ $5$-modular design, respectively. Then, $2(D_1\oplus D_2)-J$ and $2(D_3\oplus D_4)-J$ are the desired $\mathrm{MH}(46,5)$ and $\mathrm{MH}(66,5)$ matrices, respectively. Here the $5$-modular design $(20,2,3;5)$ can be obtained from the core of a $\mathrm{MH}(21,5)$ matrix by Lemma \ref{corel}, while the $(45,33,24)$ design is the complement of the one listed in \cite[p.\ 119]{CRC}. Finally, a $\mathrm{MH}(86,5)$ matrix was constructed in Example \ref{HEL}.
\end{proof}
From Proposition \ref{P39} the existence of $\mathrm{MH}(5k+2,5)$ matrices follows immediately when $k$ is even (and the case $k$ odd is impossible due to Lemma \ref{L22}).
\begin{corollary}\label{C310}
There exist $\mathrm{MH}(n,5)$ for every $n\equiv 2\ (\mathrm{mod}\ 10)$.
\end{corollary}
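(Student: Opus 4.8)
The plan is to reduce the case $n\equiv 2\pmod{10}$ to the case $n\equiv 1\pmod 5$ already settled in Proposition \ref{P39}, simply by halving the order. Write $n=2m$; since $n\equiv 2\pmod{10}$ we have $m=n/2\equiv 1\pmod 5$. Recall that $F_2$ is a real Hadamard matrix, i.e.\ a $\mathrm{MH}(2,0)$. Hence if $H$ is any $\mathrm{MH}(m,5)$, then Lemma \ref{L24} applied to $H$ and $F_2$ shows that $H\otimes F_2$ is a $\mathrm{MH}\bigl(2m,(5\cdot 0,\,5\cdot 2,\,0\cdot m)\bigr)$; since $(0,10,0)=10$ this is a $\mathrm{MH}(2m,10)$, and a fortiori a $\mathrm{MH}(2m,5)=\mathrm{MH}(n,5)$. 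Thus everything comes down to producing an $\mathrm{MH}(m,5)$ for each $m\equiv 1\pmod 5$.

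Proposition \ref{P39} does exactly this, with the sole exceptions $m=6$ and $m=11$. So the doubling argument immediately yields $\mathrm{MH}(n,5)$ for every $n\equiv 2\pmod{10}$ except the two orders $n=12$ and $n=22$, which correspond to the missing $\mathrm{MH}(6,5)$ and $\mathrm{MH}(11,5)$ and therefore must be treated separately. (The restriction to $k$ even in the preceding remark is forced: for $n=5k+2$ with $k$ odd we have $n\equiv 7\pmod{10}$, so $(n,5)=1$, $n$ is odd, and $n\equiv 2\pmod 5$ is a quadratic non-residue, whence no $\mathrm{MH}(n,5)$ exists by Lemma \ref{L22}.)

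For $n=12$ I would simply invoke a real Hadamard matrix of order $12$ (for instance the Paley matrix over $\mathbb{F}_{11}$), which is automatically a $\mathrm{MH}(12,5)$. The genuinely interesting order is $n=22$, for which no real Hadamard matrix exists; here I would give an explicit direct-sum construction. Let $D_1$ be the $(11,5,2)$ biplane and $D_2$ its complement, the $(11,6,3)$ design, both regarded as $5$-modular designs. A short check of the three congruences of Lemma \ref{lemlemma} with $v_1=v_2=11$, $k_1=5$, $\lambda_1=2$, $k_2=6$, $\lambda_2=3$ then confirms that $2(D_1\oplus D_2)-J$ is a $\mathrm{MH}(22,5)$.

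The main obstacle is precisely this last order $n=22$: since the halving trick fails exactly when $m\in\{6,11\}$ and $22$ carries no ordinary Hadamard matrix, one must locate a pair of honest symmetric designs on $v_1+v_2=22$ points whose parameters meet the modular orthogonality relations of Lemma \ref{lemlemma}. The biplane and its complement on $11$ points are the natural candidates, and verifying that their parameters satisfy the required congruences modulo $5$ is the only non-routine (though brief) computation in the proof.
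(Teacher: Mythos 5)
Your proof is correct, and its core mechanism is the same as the paper's --- doubling an order $m\equiv 1\pmod 5$ matrix by taking the Kronecker product with $F_2$ via Lemma \ref{L24} --- but your decomposition is different and, as it happens, more careful. The paper splits $n\equiv 2\pmod{10}$ into the classes $20k+2$, $40k+12$ and $40k+32$ and doubles $\mathrm{MH}(10k+1,5)$, $\mathrm{MH}(20k+6,5)$ and (three times) $\mathrm{MH}(5k+4,5)$ respectively; you instead halve uniformly to $m=n/2\equiv 1\pmod 5$ and invoke Proposition \ref{P39} once. The substantive difference is your explicit treatment of the two orders $n=12$ and $n=22$, where the halving trick would require the nonexistent $\mathrm{MH}(6,5)$ and $\mathrm{MH}(11,5)$. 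The paper's own proof runs into exactly the same obstruction without acknowledging it: its $40k+12$ case at $k=0$ and its $20k+2$ case at $k=1$ call for precisely those two missing matrices, so as written it does not actually cover $n=12$ and $n=22$, whereas your proof does. The order $12$ is disposed of by a real Hadamard matrix, and your construction for $n=22$ from the $(11,5,2)$ biplane and its $(11,6,3)$ complement checks out against Lemma \ref{lemlemma}: with $v_1=v_2=11$, $k_1=5$, $\lambda_1=2$, $k_2=6$, $\lambda_2=3$ one gets $-11+20-8\equiv 11$, $2\cdot 6\equiv 2\cdot 5-4\cdot 2$, and $4\cdot 3\equiv -4\cdot 2\pmod 5$, all of which hold. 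So your argument is not merely a valid alternative; it repairs a genuine gap in the paper's proof of this corollary.
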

\begin{proof}
The $\mathrm{MH}(20k+2,5)$ and $\mathrm{MH}(40k+12,5)$ matrices can be obtained by doubling the $\mathrm{MH}(10k+1,5)$ and $\mathrm{MH}(20k+6,5)$ matrices of Proposition \ref{P39} via Lemma \ref{L24}, respectively, while $\mathrm{MH}(40k+32,5)$ matrices can be obtained by doubling three times the $\mathrm{MH}(5k+4,5)$ matrices via Lemma \ref{L24}.
\end{proof}
We have finished the discussion of $5$-modular Hadamard matrices. The main result of the paper follows.
\begin{theorem}\label{T12345}
There exist $\mathrm{MH}(n,5)$ matrices if and only if $n\not\equiv 3, 7\ (\mathrm{mod}\ 10)$ or $n\neq 6, 11$.
\end{theorem}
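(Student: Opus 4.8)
The plan is to verify the claim one residue class at a time modulo $10$, since every building block needed is already in place in the preceding corollaries and proposition. The theorem asserts that an $\mathrm{MH}(n,5)$ exists precisely when $n$ avoids the residues $3$ and $7$ modulo $10$ and is distinct from $6$ and $11$; equivalently, nonexistence occurs exactly for $n\equiv 3,7\ (\mathrm{mod}\ 10)$ together with the two sporadic orders $n=6,11$. My strategy is to prove the two directions separately, and for the existence direction to partition all admissible $n$ among the ten residue classes and cite the appropriate construction for each.

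First I would settle the nonexistence (the ``only if'' direction). The classes $n\equiv 3,7\ (\mathrm{mod}\ 10)$ are excluded directly by Corollary \ref{trivC} (which rests on the quadratic-residue obstruction of Lemma \ref{L22}), while the two exceptional orders $n=6$ and $n=11$ are eliminated by Corollary \ref{NEX} (via the lower bound of Lemma \ref{L21}). This accounts for every value of $n$ forbidden by the theorem, so any $n$ admitting an $\mathrm{MH}(n,5)$ must indeed satisfy $n\not\equiv 3,7\ (\mathrm{mod}\ 10)$ and $n\neq 6,11$.

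For the existence direction I would dispatch the surviving residue classes in turn. The five classes $n\equiv 0,4,5,8,9\ (\mathrm{mod}\ 10)$ are handled at once by Corollary \ref{trivC}. The class $n\equiv 2\ (\mathrm{mod}\ 10)$ is covered by Corollary \ref{C310}. Finally, the two remaining classes $n\equiv 1,6\ (\mathrm{mod}\ 10)$ together form exactly the set $n\equiv 1\ (\mathrm{mod}\ 5)$, and for these Proposition \ref{P39} supplies an $\mathrm{MH}(n,5)$ for every admissible $n$, the only omissions being $n=6,11$, which are precisely the orders we have already ruled out. Since the ten residue classes modulo $10$ exhaust all orders $n\geq 1$, this completes the ``if'' direction and hence the theorem.

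The only genuine difficulty is entirely internal to Proposition \ref{P39}, which I would treat as the crux already resolved: there the infinite families $n=20k+16,\,20k+31,\,20k+41,\,20k+106$ are produced through the direct-sum construction of Lemma \ref{lemlemma} together with the core construction of Lemma \ref{corel}, whereas the sporadic small orders $n=21,26,46,66,86$ each demand a separately chosen combinatorial design (such as a projective-plane complement or a Menon design). Arranging these families and sporadic cases so that no admissible order $n\equiv 1\ (\mathrm{mod}\ 5)$ slips through is the delicate point of the whole argument; once Proposition \ref{P39}, Corollary \ref{C310}, Corollary \ref{trivC} and Corollary \ref{NEX} are granted, the present theorem is merely a bookkeeping assembly over residue classes, and I expect no further obstacle.
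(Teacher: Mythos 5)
Your proposal is correct and matches the paper's argument, which likewise assembles the theorem directly from Corollary \ref{trivC}, Proposition \ref{P39} and Corollary \ref{C310} (the nonexistence of orders $6$ and $11$ being already contained in Proposition \ref{P39} via Corollary \ref{NEX}). Your residue-by-residue bookkeeping is just a more explicit writing-out of the same one-line assembly.
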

\begin{proof}
Follows immediately from Corollary \ref{trivC}, Proposition \ref{P39} and Corollary \ref{C310}.
\end{proof}
In particular, the Hadamard conjecture modulo $5$ is true (cf.\ \cite{EK}).
\begin{corollary}
For every $k\geq 1$ there exist $5$-modular Hadamard matrices of order $4k$.
\end{corollary}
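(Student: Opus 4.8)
The plan is to reduce the statement to the characterization already obtained in Theorem \ref{T12345}. Since that theorem decides the existence of $\mathrm{MH}(n,5)$ matrices for every order, it suffices to verify that every $n$ of the form $4k$ with $k\geq1$ lies in the existence range, i.e.\ that it simultaneously avoids the forbidden residue classes $n\equiv 3,7\ (\mathrm{mod}\ 10)$ and the two sporadic exceptions $n=6,11$.

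First I would examine the residue of $4k$ modulo $10$. As $k$ runs through the positive integers, $4k\ (\mathrm{mod}\ 10)$ cycles through the values $0,2,4,6,8$; in particular $4k$ is always even, and hence can never be congruent to the odd residues $3$ or $7$ modulo $10$. This disposes of the first obstruction. Next I would rule out the two exceptional orders: the value $6$ is not a multiple of $4$ and $11$ is odd, so $4k\neq 6,11$ for every $k\geq1$. Consequently every $n=4k$ satisfies the hypotheses of Theorem \ref{T12345}, which then guarantees a $\mathrm{MH}(4k,5)$ matrix; alternatively one could trace the relevant orders directly through Corollary \ref{trivC}, Proposition \ref{P39} and Corollary \ref{C310}, according to whether $4k$ is $\equiv 0,4,8$ or $\equiv 2,6\ (\mathrm{mod}\ 10)$.

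Since the argument is a direct specialization of the main theorem, there is no genuine obstacle. The only point requiring (minimal) care is the parity and divisibility bookkeeping: one must confirm that all of the excluded data — the residues $3,7$ and the orders $6,11$ — are either odd or non-multiples of $4$, and are therefore unreachable by $4k$.
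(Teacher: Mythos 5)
Your proposal is correct and matches the paper's proof, which simply invokes Theorem \ref{T12345}; you have merely spelled out the (immediate) verification that $4k$ is even and hence never $\equiv 3,7\ (\mathrm{mod}\ 10)$, and that $4k\neq 6,11$. No further comment is needed.
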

\begin{proof}
Follows immediately from Theorem \ref{T12345}.
\end{proof}
We believe that the new tools and ideas presented in this paper are powerful enough to completely decide the existence of $\mathrm{MH}(n,m)$ matrices for some further values of $m$ as well. In light of our results, however, it seems that presenting infinite constructions and dealing with several exceptional cases of relatively small order might be equally difficult in general.

\end{document}